\newcommand\RR{{{\mathbb R}}}
\newcommand\cL{{\mathcal L}}
\newcommand\re{{\mathcal Re}}
\newtheorem{theo}{Theorem}[section]
\newtheorem{lemm}[theo]{Lemma}
\newtheorem{rema}[theo]{Remark}
\begin{document}

\title[Hypoellipticity effects for Landau]
{Landau linearized operator and hypoellipticity}

\author{R. Alexandre}
\address{R. Alexandre,
Department of Mathematics, Shanghai Jiao Tong University
\newline\indent
Shanghai, 200240, P. R. China
\newline\indent
and \newline\indent Irenav, Arts et Metiers Paris Tech, Ecole
Navale,
\newline\indent
Lanveoc Poulmic, Brest 29290 France }
\email{radjesvarane.alexandre@ecole-navale.fr}

\subjclass[2000]{35H10, 76P05, 84C40}

\date{February 2011}

\keywords{Landau equation, hypoellipticity}

\begin{abstract}
We consider the linearized Landau operator for which we provide simple proofs of hypoellipticity, and in particular we recover the recent results of H\' erau and Pravda-Starov \cite{herau-all}. Our arguments are elementary and in particular avoids any use of pseudo-differential calculus.
\end{abstract}

\maketitle


\section{Introduction}

We consider hypoelliptic effects associated with the time version of a kinetic equation related to the linearized Landau equation and studied recently by Herau and Pravda-Starov \cite{herau-all}
\begin{equation}\label{eq-1}
\partial_t f +v.\nabla_x f - \nabla_v .\lambda (v) \nabla_v f - (v\wedge \nabla_v ).\mu (v) (v\wedge \nabla_v f) +F(v) f(v) = h
\end{equation}
where $t\in \RR$, $x\in \RR^3$ and $v\in \RR^3$. $f$ and $h$ will be supposed to be in $L^2$, where here and below $L^2$ denotes the usual space w.r.t. full variables $(t,x,v)$. In fact, as it will be clear from the proofs, it is also possible to work in weighted $L^2$ spaces and even in weighted Sobolev spaces. The norm in $L^2$ will be denoted by $\| .\|$ and its associated scalar product by $(.\  , .)$. We shall work with real functions $f$ and $h$ though there is absolutely no difficulties in considering complex cases, up to the addition of the real parts when necessary. 

As regards the coefficients appearing in \eqref{eq-1}, we assume that the positive functions $\lambda$, $\mu$ and $F$ satisfy the following coercive type lower bounds
\begin{equation}\label{hyp-1}
 \lambda (v) \gtrsim <v>^\gamma , \ \mu (v) \gtrsim <v>^\gamma \mbox{ and } F(v) \gtrsim <v>^{\gamma +2}
 \end{equation}
where $-3 \leq \gamma \leq 1$, and
\begin{equation}\label{hyp-2}
| D^m_v \lambda | \lesssim <v>^{\gamma -m} , | D^m_v \mu | \lesssim <v>^{\gamma -m} \mbox{ and }| D^m_v F(v)| \lesssim <v>^{\gamma +1 -m}
\end{equation}
for any $m$ of order at most  two.

We set
\begin{equation}\label{def-op}
\cL (f) = - \nabla_v .\lambda (v) \nabla_v f - (v\wedge \nabla_v ).\mu (v) (v\wedge \nabla_v f) +F(v) f(v) 
= \cL_1 (f) +\cL_2 (f) +\cL_3(f)
\end{equation}
so that \eqref{eq-1} writes also
\begin{equation}\label{eq-2}
\partial_t f +v.\nabla_x f  +\cL (f)  = h.
\end{equation}
As explained in \cite{herau-all}, \eqref{eq-1} or its version \eqref{eq-2} is related to the linearized Landau operator, which plays a crucial role in Plasma Physics, see for example \cite{alex-review,villani} and the references therein.  Moreover the above assumptions \eqref{hyp-1} and \eqref{hyp-2} are natural in view of Guo's work \cite{guo}. However, note that the model \eqref{eq-1} does not take into account the kernel which is naturally associated with the true Landau linearized equation. In particular, for applications to the true nonlinear Landau model near equilibrium, the present results need to be adapted, though the main issue is related to the macroscopic part.

Our main concern is to give a shorter proof of the following result about optimal hypoelliptic results which was first proven in \cite{herau-all} (in the time independent version)

\begin{theo} [Herau and Pravda-Starov \cite{herau-all}] Assume $f$ and $h$ belong to $L^2$. Then, under the hypothesis \eqref{hyp-1} and \eqref{hyp-2}, we have
$$
\left\{\begin{array}{c} \| <v>^{\gamma +2} f\|^2 + \| <v>^\gamma |D_v|^2 f\|^2 + \| <v>^\gamma |v\wedge \nabla_v |^2 f\|^2 \\ +  \| <v>^{\gamma\over 3} |D_x |^{2\over 3} f\|^2 + \| <v>^{\gamma\over 3} |v\wedge \nabla_x |^{2\over 3}f \|^2 \lesssim \| f\|^2 +\| h\|^2 . \end{array}\right.$$
\end{theo}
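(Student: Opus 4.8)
The plan is to perform energy estimates on equation \eqref{eq-2} in the spirit of hypocoercivity/commutator methods, combining the "elliptic" gain in $v$ coming from $\cL$ with a gain in $x$ recovered through the transport term $v\cdot\nabla_x$. First I would multiply \eqref{eq-2} by $<v>^{2\gamma}f$ (or rather, test against suitable weighted multipliers) and integrate; since $\partial_t$ and $v\cdot\nabla_x$ are skew-adjoint on $L^2$, their contributions vanish, and the coercivity hypothesis \eqref{hyp-1} together with integrations by parts yields the bound
\begin{equation}\label{plan-ell}
\| <v>^{\gamma+2} f\|^2 + \| <v>^\gamma |D_v|^2 f\|^2 + \| <v>^\gamma |v\wedge\nabla_v|^2 f\|^2 \lesssim \|f\|^2 + \|h\|^2,
\end{equation}
where the upper bounds \eqref{hyp-2} on the derivatives of $\lambda,\mu,F$ are used to control the error terms generated when the weight $<v>^{2\gamma}$ is moved past the differential operators. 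This handles the first line of the Theorem.

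Next, to recover the $x$-regularity I would exploit the commutator structure $[v\cdot\nabla_x,\nabla_v]=-\nabla_x$ (and similarly $[v\cdot\nabla_x, v\wedge\nabla_v]$ produces a $v\wedge\nabla_x$ contribution). Concretely, I would introduce a multiplier of the form $\Psi(D_x,D_v,v)$ built to be of order $2/3$ in $x$ and compensating order $-$something in $v$ — morally $<v>^{2\gamma/3}|D_x|^{-1/3}\nabla_v$ tested suitably — and compute $(\,\partial_t f+v\cdot\nabla_x f+\cL f,\ \Psi^* \Psi f\,)$. The point is that the transport term, which is \emph{not} skew-adjoint against $\Psi^*\Psi$ because $\Psi$ involves $\nabla_v$, produces through the commutator a positive term of size $\| <v>^{\gamma/3}|D_x|^{2/3} f\|^2$ (and its $v\wedge\nabla_x$ analogue), while the remaining terms are absorbed using \eqref{plan-ell} and Cauchy–Schwarz with a small parameter. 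The exponent $2/3$ is exactly what the scaling of a second-order operator in $v$ against a first-order transport operator dictates, as in Kolmogorov-type equations. The author's stated aim of avoiding pseudo-differential calculus suggests that $\Psi$ should instead be realized by an explicit Fourier multiplier in $x$ only, combined with the genuine vector fields $\nabla_v$ and $v\wedge\nabla_v$, so that all manipulations reduce to integrations by parts and elementary inequalities.

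I expect the main obstacle to be the bookkeeping of the variable coefficients $\lambda(v),\mu(v),F(v)$ and the weights $<v>^\gamma$ throughout the commutator computation: one must check that every error term produced by commuting the weight, the $x$-multiplier, and the vector fields with $\cL$ is controlled by the already-established quantities on the right-hand side of \eqref{plan-ell} (with a small constant), using precisely the two-derivative bounds \eqref{hyp-2} and the admissible range $-3\le\gamma\le1$. A secondary delicate point is justifying the integrations by parts and the finiteness of all quantities for general $f,h\in L^2$ — this is handled in the standard way by a regularization/density argument, working first with smooth compactly supported data and passing to the limit. Once the two families of estimates are combined with appropriately tuned small parameters, the desired inequality follows by absorption.
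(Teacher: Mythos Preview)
Your plan has a genuine gap in the very first step. Testing \eqref{eq-2} against $<v>^{2\gamma}f$ (or any weighted multiple of $f$) gives only \emph{first-order} velocity control, namely quantities like $\|<v>^{\gamma}\nabla_v f\|^2$ and $\|<v>^{\gamma}(v\wedge\nabla_v f)\|^2$, not the second-order norms $\|<v>^{\gamma}|D_v|^2 f\|^2$ and $\|<v>^{\gamma}|v\wedge\nabla_v|^2 f\|^2$ that appear in your \eqref{plan-ell}. To reach second order one is essentially forced to test against $\cL f$ itself, i.e.\ to compute $\|\cL f\|^2$. But then the transport term does \emph{not} drop out: an integration by parts gives
\[
(v\cdot\nabla_x f,\ \cL_1 f+\cL_2 f)\ =\ (\nabla_x f,\ \lambda\,\nabla_v f)\ -\ (v\wedge\nabla_x f,\ \mu\,(v\wedge\nabla_v f)),
\]
and this cross term is precisely of the size $\|<v>^{\gamma/3}|D_x|^{2/3}f\|\cdot(\text{mixed }v\text{-derivative norm})$ --- it cannot be bounded using \eqref{plan-ell} alone. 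So the $v$-estimate and the $x$-estimate are genuinely \emph{coupled}; your sequential scheme ``first (\ref{plan-ell}), then $x$-regularity'' is circular.

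The paper closes this loop by a different mechanism. It first proves Bouchut-type transport lemmas (Lemmas~\ref{lemme-2}--\ref{lemme-3}) of the form
\[
\|<v>^{\gamma/3}|D_x|^{2/3}f\|\ \lesssim\ \|g\|^{2/3}\,\big(\|<v>^{\gamma}|D_v|^2 f\|+\cdots\big)^{1/3},\qquad g=h-\cL f,
\]
obtained from the elementary identity $\partial_{x_j}f=\partial_{v_j}g-(\partial_t+v\cdot\nabla_x)\partial_{v_j}f$ rather than from a commutator multiplier $\Psi$. Feeding this into the cross term above makes $(v\cdot\nabla_x f,\cL f)$ sublinear in each $\|\cL_i f\|$ (the total homogeneity is strictly below $2$), so after expanding $\|\cL f\|^2=\sum\|\cL_i f\|^2+2\sum(\cL_i f,\cL_j f)$ and controlling the remaining cross product $(\cL_1 f,\cL_2 f)$ by explicit commutator identities, everything can be absorbed. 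The fractional exponent $2/3$ is exactly what makes this absorption work; without it your decoupled strategy cannot be salvaged.
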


The proof in \cite{herau-all} uses pseudo-differential calculus, together with Wick calculus. In the continuation of a previous work of Morimoto and Xu \cite{morimoto-all}, a similar study was also performed in \cite{lerner-all} in order to deduce hypoelliptic results for a fractional order kinetic equation, and again the proof therein was using such tools.

Recently, we have provided in \cite{alex-hypo} a very simple proof of the results of \cite{lerner-all} by using arguments originally introduced by Bouchut \cite{bouchut} and Perthame \cite{perthame}.

Again herein, we shall give a different proof of this main Theorem by using simple and standard arguments. Moreover, we keep the regularity of the coefficients as low as possible, this point being connected with the second order commutators estimations which are needed in the proofs. As a byproduct, other estimations will emerge from our computations. Note also that, as usual, time derivatives estimates are also available, but we do not detail this point. All in all, together with our previous work \cite{alex-hypo}, we provide extremely simple arguments to deduce hypoelliptic results for kinetic equations with a diffusive part. It is to be expected that the underlying arguments are sufficiently simple to enable the study of different questions related to diffusive models arising from scaled kinetic equations. We hope to get back on this issue soon. Furthermore as it will be clear from the proofs, the Cauchy problem can be also analyzed with the same methods up to some minor changes. It is expected that such simple methods will provide other methods for the analysis of the Cauchy problem associated with fully nonlinear kinetic equations such as Boltzmann or Landau equations, see the quoted works in the bibliography.

We shall always assume that all functions $f$ and $h$ are smooth.The paper is organized as follows.  We first deduce in Section 2 some estimates from a transport equation. These are used in Section 3, in particular to control cross products terms.

\section{Preliminary results}

First of all, by multiplying the equation by $f$, integration over all variables and using the assumptions \eqref{hyp-1}, we get

\begin{lemm}\label{lemme-1} One has
$$\| <v>^{\gamma \over 2} \nabla_v f\|^2 + \| <v>^{\gamma\over 2} (v\wedge \nabla_v f) \|^2 + \| <v>^{{\gamma\over 2} +1} f\|^2 \lesssim \| f\| \ \| h\| .$$
\end{lemm}

The next two Lemmas are the adaptions of some of the steps which appear in Bouchut's paper \cite{bouchut}. They are related to transport type equations, with a given right hand side, and assuming that we know already some kind of regularity w.r.t. velocity variable, they give some informations about regularity of some spatial derivatives.

\begin{lemm}\label{lemme-2} Assume $f\in L^2$, $g\in L^2$, $<v>^{\gamma} | D_v|^2 f\in L^2$ and
$$\partial_t f + v.\nabla_x f =g .$$
Then
$$ \| <v>^{\gamma\over 3} |D_x|^{2\over 3} f\| \lesssim \bigg\{  \| <v>^{\gamma -1} f\|^{1\over 3} + \|<v>^\gamma |D_v|^2 f\|^{1\over 3} \bigg\}\  \| g\|^{2\over 3}.$$
\end{lemm}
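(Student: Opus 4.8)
The plan is to reduce everything, via a partial Fourier transform, to an elementary interpolation inequality in the velocity variable alone. Let $\tau$ be the variable dual to $t$ and $\xi$ the variable dual to $x$, and write $\hat f(\tau,\xi,v)$ for the Fourier transform of $f$ in $(t,x)$; the transport equation becomes the algebraic identity $i(\tau+v\cdot\xi)\hat f=\hat g$. For $\xi\neq0$ set $\omega=\xi/|\xi|$, let $s=v\cdot\omega$ be the associated velocity coordinate and $s_0=-\tau/|\xi|$, so that $\tau+v\cdot\xi=|\xi|(s-s_0)$, hence $\hat g=i|\xi|(s-s_0)\hat f$ and $\int_v|\hat g|^2=|\xi|^2\int_v(s-s_0)^2|\hat f|^2$. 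Since $\langle v\rangle^{\gamma/3}$ commutes with $|D_x|^{2/3}$, Plancherel's theorem shows --- after squaring and using $(X^2+Y^2)^{1/6}\le X^{1/3}+Y^{1/3}$ --- that the Lemma follows from the family of fibre inequalities
\[
\int_{\RR^3}\langle v\rangle^{2\gamma/3}|u|^2\,dv\ \lesssim\ \Big(\int_{\RR^3}\langle v\rangle^{2\gamma-2}|u|^2\,dv+\int_{\RR^3}\langle v\rangle^{2\gamma}\big||D_v|^2u\big|^2\,dv\Big)^{1/3}\Big(\int_{\RR^3}(s-s_0)^2|u|^2\,dv\Big)^{2/3}
\]
applied with $u=\hat f(\tau,\xi,\cdot)$ --- the factors $|\xi|^{4/3}$ on the two sides matching, one coming from $|D_x|^{2/3}$ and one from $|\hat g|^2=|\xi|^2(s-s_0)^2|\hat f|^2$ --- followed by an integration in $(\tau,\xi)$ using H\"older's inequality with exponents $3$ and $3/2$; by rotation invariance and translation invariance in $s$ the constant in the fibre inequality does not depend on $(\tau,\xi)$.

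I would first treat the case $\gamma=0$, which is the core of the matter. Write $\partial_s=\omega\cdot\nabla_v$ and set $a=\|u\|_{L^2(\RR^3)}$, $b=\||D_v|^2u\|_{L^2(\RR^3)}$, $c=\|(s-s_0)u\|_{L^2(\RR^3)}$. Integration by parts in $s$ (the boundary term vanishing, by a density argument if need be) gives
\[
a^2=\int|u|^2=-2\,\mathrm{Re}\!\int(s-s_0)\,\bar u\,\partial_s u\ \le\ 2c\,\|\partial_s u\|,
\]
while Plancherel's theorem in $v$ yields $\|\partial_s u\|\le\||D_v|u\|$ (since $|\omega\cdot\eta|\le|\eta|$) and $\||D_v|u\|^2\le\|u\|\,\||D_v|^2u\|=ab$, so $\|\partial_s u\|^2\le ab$. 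Therefore $a^2\le 2c(ab)^{1/2}$, i.e. $a^3\le 4c^2b$, so that $a^6\le16c^4b^2\le16c^4(a^2+b^2)$ --- precisely the fibre inequality for $\gamma=0$. Reassembling by H\"older as above recovers the unweighted version of the Lemma.

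The general weighted case is, I expect, the only genuinely delicate step. I would repeat the same computation while carrying the weight $\langle v\rangle$ along; besides the terms above one then picks up contributions in which a $v$-derivative has fallen on a power of $\langle v\rangle$, together with the need to compare $\|\langle v\rangle^\gamma\partial_s u\|$ and $\|\langle v\rangle^\gamma|D_v|u\|$ with $\|\langle v\rangle^\gamma|D_v|^2u\|$ and $\|\langle v\rangle^{\gamma-1}u\|$. Since $|\partial_v^m\langle v\rangle^\alpha|\lesssim\langle v\rangle^{\alpha-m}$ all these error terms are formally of lower order, and using $\gamma\le1$, $|s|\le\langle v\rangle$ and the identity $(s-s_0)s=(s-s_0)^2+s_0(s-s_0)$ one has to check that each is either absorbed by $\int\langle v\rangle^{2\gamma/3}|u|^2$ or bounded by the two factors on the right; this bookkeeping is where the proof really has to be done. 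An essentially equivalent but perhaps more transparent route is to localise $v$ to dyadic shells $\langle v\rangle\sim2^{j}$ with a smooth partition of unity, rescale $v\mapsto 2^{j}v$ on each shell so that the weights become the constants $2^{j\alpha}$ and apply the $\gamma=0$ inequality, then sum the scale-uniform estimates by H\"older; one then only has to commute the cutoffs with the operators occurring in the proof --- harmless, since they commute with multiplication by $(s-s_0)$ and cost just a factor $\langle v\rangle^{-1}$ against $\nabla_v$ --- and match the powers of $2^{j}$ with the target weights, which is exactly the point at which the admissible range of $\gamma$ is used.
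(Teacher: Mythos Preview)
Your argument is correct in spirit and takes a route genuinely different from the paper's. The paper never freezes $(\tau,\xi)$; instead it uses Bouchut's identity $\partial_{x_j}f=\partial_{v_j}g-(\partial_t+v\cdot\nabla_x)\partial_{v_j}f$, writes $\|\langle v\rangle^{\gamma/3}|D_x|^{2/3}f\|^2$ as a scalar product in $L^2_{t,x,v}$, substitutes this identity, and then integrates by parts directly, carrying the weight $\langle v\rangle^{2\gamma/3}$ throughout; the weighted case is thus treated in one pass rather than reduced to an unweighted model. Your approach --- Fourier transform in $(t,x)$, reduction to the fibre inequality $\int\langle v\rangle^{2\gamma/3}|u|^2\lesssim(\cdots)^{1/3}\big(\int(s-s_0)^2|u|^2\big)^{2/3}$, and then H\"older in $(\tau,\xi)$ --- is closer in flavour to averaging-lemma arguments and gives a very clean proof when $\gamma=0$: the chain $a^2\le 2c\|\partial_s u\|$, $\|\partial_s u\|^2\le ab$, hence $a^3\le 4c^2b$, is sharper and more transparent than the corresponding computation in the paper. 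For general $\gamma$ you only sketch the argument, but the dyadic strategy you outline does work: localising to $\langle v\rangle\sim 2^j$, applying the unweighted inequality to $\chi_j u$, multiplying by $2^{2j\gamma/3}=(2^{2j\gamma})^{1/3}$ and summing by H\"older with exponents $(3,3/2)$ gives exactly the right bound once one controls $\sum_j 2^{2j\gamma}\|\Delta(\chi_j u)\|^2$; the commutators $[\Delta,\chi_j]$ produce $\int\langle v\rangle^{2\gamma-2}|\nabla u|^2+\int\langle v\rangle^{2\gamma-4}|u|^2$, and one more integration by parts bounds the gradient term by $\|\langle v\rangle^\gamma\Delta u\|^2+\|\langle v\rangle^{\gamma-1}u\|^2$, closing the estimate. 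One small inaccuracy: the uniformity of the constant in $(\tau,\xi)$ is not due to ``translation invariance in $s$'' (the weight $\langle v\rangle$ is not translation invariant), but simply to the fact that the integration-by-parts identity $\int|u|^2=-2\,\re\int(s-s_0)\bar u\,\partial_s u$ holds for every $s_0$. What your approach buys is conceptual clarity and a sharp unweighted core; what the paper's approach buys is that the weighted case requires no separate treatment or localisation.
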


Observe that the first term in the r.h.s. of this estimate is controlled by the $L^2$ norm of $f$, since $\gamma \leq 1$. 
\begin{proof}
Following \cite{bouchut}, we start from the formula
$$\partial_{x_j} f = \partial_{v_j} g - (\partial_t +v.\nabla_x )\partial_{v_j} f.$$
Then we write
$$\| <v>^{\gamma\over 3} |D_x|^{2\over 3} f\|^2 = \| <v>^{\gamma \over 3} | D_x|^{- 1\over 3} \partial_{x_j} f \|^2$$
$$ = ( <v>^{{2\gamma}\over 3} | D_x |^{-2\over 3} \partial_{x_j} \bar f , \partial_{x_j} f  )  = (<v>^{2\gamma\over 3} |D_x|^{-2\over 3} \partial_{x_j} \bar f , \partial_{v_j} g - (\partial_t +v.\nabla_x )\partial_{v_j} f)$$
$$ = - 4{\gamma \over 3 }( <v>^{2\gamma \over 3 -1} v.\nabla_x (|D_x|^{-2\over 3} f , g) - (<v>^{2\gamma\over 3} |D_x|^{-2\over 3} \partial_{x_j}\partial_{v_j} \bar f ,g) + ( <v>^{2\gamma\over 3} |D_x|^{-2\over 3} \partial_{x_j} \bar g , \partial_{v_j} f ) .$$

Thus
\begin{equation}\label{ineq-1}
\| <v>^{\gamma\over 3} |D_x|^{2\over 3} f\|^2  = - 4{\gamma \over 3} ( <v>^{2{\gamma \over 3} -1} v.\nabla_x (|D_x|^{-2\over 3} f , g)  - 2 \re (<v>^{2\gamma\over 3} |D_x|^{-2\over 3} \partial_{x_j}\partial_{v_j} \bar f ,g) =I+II.\end{equation}

Starting from
$$II^2 \lesssim \| <v>^{2\gamma\over 3} |D_x|^{-2\over 3} \partial_{x_j}\partial_{v_j}  f \|^2 \ \| g\|^2 ,$$

it follows that
$$\| <v>^{2\gamma\over 3} |D_x|^{-2\over 3} \partial_{x_j}\partial_{v_j}  f \|^2  \lesssim (<v>^{4\gamma \over 3} \|D_x|^{2\over 3} \partial_{v_j} f , \partial_{v_j} f )$$
$$ = + (<v>^{4\gamma \over 3} \|D_x|^{2\over 3}  f ,|D_v|^2 f )  +  8\gamma /3 ( <v>^{{{4\gamma}\over 3} - 1} v_j |D_x|^{2\over 3} f , \partial_{v_j} f) = A+B.$$

Now, one has
$$A \lesssim \| <v>^{\gamma\over 3} |D_x|^{2\over 3} f\| \ \|  <v>^\gamma | D_v|^2 f\|$$
while for $B$, we have
$$ B = 8{\gamma \over 3} ( <v>^{{{4\gamma}\over 3} - 1}  |D_x|^{2\over 3} f , v.\nabla_v f)  = 8{\gamma \over 3} ( <v>^{{{4\gamma}\over 3} - 1} [ |D_x|^{1\over 3} f ], v.\nabla_v  [ |D_x|^{1\over 3} f ]) $$
which is of the form
$$ ( g, \beta (v) .\nabla_v g) \simeq (g,g \ div_v [\beta ])$$
where $\beta (v) = <v>^{{{4\gamma}\over 3} - 1} v$. Therefore $div \ \beta \simeq <v>^{{{4\gamma}\over 3} - 1}  + <v>^{{{4\gamma}\over 3} - 3}  <v>^2$. Thus 
$$B \lesssim (  <v>^{{{4\gamma}\over 3} - 1}  g, g) = ( <v>^{{{4\gamma}\over 3} - 1}  [ |D_x|^{1\over 3} f ] , [ |D_x|^{1\over 3} f ])$$
$$ = (<v>^{\gamma\over 3} |D_x|^{2\over 3} f, <v>^{\gamma -1} f) \lesssim \| <v>^{\gamma\over 3} |D_x|^{2\over 3} f \|  \ \| <v>^{\gamma -1} f \|$$
and
$$A+B \lesssim \| <v>^{\gamma\over 3} |D_x|^{2\over 3} f \| \ \bigg\{ \|<v>^\gamma |D_v|^2 f\|  + \| <v>^{\gamma -1} f \| \bigg\} .$$
Therefore
$$II \lesssim  \| <v>^{\gamma\over 3} |D_x|^{2\over 3} f\|^{1\over 2}  \bigg\{ \| <v>^{\gamma -1} f\|^{1\over 2} + \|<v>^\gamma |D_v|^2 f\|^{1\over 2} \bigg\}\ \| g\| .$$

For $I$, (using Fourier transform w.r.t. variable $x$)
$$I \lesssim \|<v>^{2{\gamma\over 3} - {1\over 2}} |D_x|^{1\over 3} f \| \ \| g\|$$
and by the same computations, we get
$$I \lesssim \| <v>^{\gamma\over 3} |D_x |^{2\over 3} f \|^{1\over 2}\  \| <v>^{\gamma -1} f\|^{1\over 2}\  \|g\| .$$
Thus
$$I+II \lesssim  \| <v>^{\gamma\over 3} |D_x|^{2\over 3} f\|^{1\over 2}\  \bigg\{  \| <v>^{\gamma -1} f\|^{1\over 2} + \|<v>^\gamma |D_v|^2 f\|^{1\over 2} \bigg\}\  \| g\|$$
and finally in view of \eqref{ineq-1}
$$ \| <v>^{\gamma\over 3} |D_x|^{2\over 3} f\| \lesssim \bigg\{ \| <v>^{\gamma -1} f\|^{1\over 3} + \|<v>^\gamma |D_v|^2 f\|^{1\over 3} \bigg\} \  \| g\|^{2\over 3}.$$

This ends the proof of the Lemma.
\end{proof}

\begin{lemm}\label{lemme-3} Assume $f\in L^2$, $<v>^{\gamma +2}f\in L^2$, $g\in L^2$, $<v>^\gamma | v\wedge D_v|^2 f\in L^2$ and
$$\partial_tf +v.\nabla_x f =g .$$
Then
$$\| <v>^{\gamma\over 3} <v\wedge D_x>^{2\over 3} f\| \lesssim  \| <v>^{\gamma\over 3} f\| + $$
$$ + \| g\|^{2\over 3} \bigg\{  \| <v>^{\gamma +2} f\|^{1\over 3} + \|<v>^\gamma |D_v|^2 f\|^{1\over 3} + \|<v>^\gamma |v\wedge D_v|^2 f\|^{1\over 3} \bigg\}  .$$
\end{lemm}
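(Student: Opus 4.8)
The plan is to transpose the proof of Lemma~\ref{lemme-2}, with the commuting pair $(\partial_{v_j},\partial_{x_j})$ replaced by the rotation fields $S_k:=(v\wedge\nabla_v)_k$ and $R_k:=(v\wedge\nabla_x)_k$, and to work throughout with the smoothed multiplier $\Lambda:=<v\wedge D_x>=(1+|v\wedge D_x|^2)^{1/2}$, so as never to divide by $|v\wedge\xi|$, which degenerates on $\{\xi\parallel v\}$. First I would record the elementary commutator relations $[S_k,T]=R_k$ and $[R_k,T]=0$, where $T:=\partial_t+v\cdot\nabla_x$; together with $Tf=g$ they give the Bouchut-type identity $R_kf=S_kg-T\,S_kf$. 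One also notes that $T$, being a skew-adjoint transport operator, commutes with multiplication by $<v>^{s}$ and with every $x$-Fourier multiplier depending only on $(v,\xi)$, in particular with $\Lambda^{s}$.

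Since $\Lambda^{2}=1+|v\wedge D_x|^{2}=1-\sum_kR_k^{2}$ and $\Lambda^{s}$ commutes with $<v>^{2\gamma/3}$, one has the energy identity
\[
\|<v>^{\gamma/3}\Lambda^{2/3}f\|^{2}=\bigl(<v>^{2\gamma/3}\Lambda^{-2/3}f,f\bigr)+\sum_k\bigl(<v>^{2\gamma/3}\Lambda^{-2/3}R_kf,R_kf\bigr),
\]
and the first term is at most $\|<v>^{\gamma/3}f\|^{2}$ because $\Lambda^{-1/3}$ is a contraction; this accounts for the additive term $\|<v>^{\gamma/3}f\|$ of the statement. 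In the remaining sum I substitute only the \emph{second} factor, $R_kf=S_kg-T\,S_kf$. The transport contribution $-\sum_k(<v>^{2\gamma/3}\Lambda^{-2/3}R_kf,\,T\,S_kf)$ is integrated by parts in $(t,x)$; using $[T,R_k]=0$, $[T,<v>^{2\gamma/3}\Lambda^{-2/3}]=0$ and $Tf=g$ it becomes $\sum_k(<v>^{2\gamma/3}\Lambda^{-2/3}R_kg,\,S_kf)$, and one further integration by parts in $x$ turns it into $-\sum_k(<v>^{2\gamma/3}\Lambda^{-2/3}R_kS_kf,\,g)$. There remains the integration by parts in $v$ of $\sum_k(<v>^{2\gamma/3}\Lambda^{-2/3}R_kf,\,S_kg)$.

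Here the argument departs from Lemma~\ref{lemme-2}, and this is where the difficulty lies: $S_k$ annihilates the radial weight $<v>^{2\gamma/3}$ (so no ``weight term'' appears), but $S_k$ no longer commutes with $R_k$ nor with $\Lambda^{-2/3}$, since these carry $v$-dependent coefficients. One is left with $\sum_k(<v>^{2\gamma/3}\Lambda^{-2/3}S_kR_kf,\,g)$ and a genuine commutator term $-\sum_k(<v>^{2\gamma/3}[S_k,\Lambda^{-2/3}]R_kf,\,g)$; writing $S_kR_k=R_kS_k+[S_k,R_k]$ and using that $\Lambda^{-2/3}R_k$ has symbol of order $\Lambda^{1/3}$, a further integration by parts in $v$ brings up $\sum_kS_k^{2}=-|v\wedge D_v|^{2}$ (hence the term $\|<v>^{\gamma}|v\wedge D_v|^{2}f\|$, paired against $<v>^{\gamma/3}\Lambda^{2/3}f$), together with $[S_k,\Lambda^{2/3}]$. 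All these commutators carry, at the symbol level, a factor $v\cdot\xi$ — the non-elliptic direction, in which $\Lambda$ provides no gain: for instance $[S_k,\Lambda^{-2/3}]$ has symbol $\tfrac23(v\cdot\xi)(v\wedge\xi)_k(1+|v\wedge\xi|^2)^{-4/3}$, of order $|v\cdot\xi|\,\Lambda^{-2/3}$ after multiplication by $R_k$, and $\sum_k[S_k,R_k]=-2\,v\cdot\nabla_x$. Such a factor cannot be dominated by $\Lambda$, and since the hypotheses do not provide $\partial_tf$ or $v\cdot\nabla_xf$ in $L^{2}$ individually it must not be handled in isolation; instead one keeps it bundled as $v\cdot\nabla_x=T-\partial_t$ and feeds it back into $Tf=g$, so that for the smooth functions under consideration the dangerous contributions either cancel, or reduce to pieces controlled by $\|g\|$, quantities of favourable sign, and exact $t$-derivatives integrating to zero. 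The book-keeping of the powers of $|v|$ carried by the $v$-coefficients of $S_k$, $R_k$ and by $\partial_v\Lambda^{s}$ is exactly what forces the weight $<v>^{\gamma+2}$, while the radial part of the $v$-Hessian surviving these manipulations is controlled by $\|<v>^{\gamma}|D_v|^{2}f\|$; the first-order $v$-derivatives of $f$ that arise are reduced to $\|f\|$ and the above second-order quantities by the interpolation $\|Au\|\le\|A^{2}u\|^{1/2}\|u\|^{1/2}$.

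Collecting the estimates and applying Cauchy--Schwarz one reaches an inequality of the form
\[
\|<v>^{\gamma/3}\Lambda^{2/3}f\|^{2}\lesssim\|<v>^{\gamma/3}f\|^{2}+\|g\|\,\|<v>^{\gamma/3}\Lambda^{2/3}f\|^{1/2}\Bigl\{\|<v>^{\gamma+2}f\|+\|<v>^{\gamma}|D_v|^{2}f\|+\|<v>^{\gamma}|v\wedge D_v|^{2}f\|\Bigr\}^{1/2},
\]
and Young's inequality then gives the claimed bound, exactly as in the final step of Lemma~\ref{lemme-2}. The main obstacle is the one just described: in contrast with Lemma~\ref{lemme-2}, the operators defining the estimate have $v$-dependent coefficients, so commuting $v\wedge\nabla_v$ past $v\wedge\nabla_x$ and past $<v\wedge D_x>^{-2/3}$ produces terms in the bad direction $v\cdot\nabla_x$; absorbing them through the transport equation is what the whole proof turns on, and is what makes both the extra weight $<v>^{\gamma+2}$ and the full $v$-Laplacian term $\|<v>^{\gamma}|D_v|^{2}f\|$ appear on the right-hand side.
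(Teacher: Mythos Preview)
Your framework is the paper's: the Bouchut identity $R_kf=S_kg-T\,S_kf$ with $S_k=(v\wedge\nabla_v)_k$, $R_k=(v\wedge\nabla_x)_k$, the energy splitting via $\Lambda^{2}=1+|v\wedge D_x|^{2}$, and the commutator bookkeeping are exactly what the paper does. The gap is in the one place you flag as the difficulty, namely the $v\cdot\xi$ terms coming from $\sum_k[S_k,R_k]=-2\,v\cdot\nabla_x$ and from $[S_k,\Lambda^{-2/3}]$. Your proposed cure, writing $v\cdot\nabla_x=T-\partial_t$ and ``feeding it back'' into $Tf=g$, does not close: a term such as $\bigl(\langle v\rangle^{2\gamma/3}\Lambda^{-2/3}(v\cdot\nabla_x)f,\,g\bigr)$ becomes $\bigl(\langle v\rangle^{2\gamma/3}\Lambda^{-2/3}g,\,g\bigr)-\bigl(\langle v\rangle^{2\gamma/3}\Lambda^{-2/3}\partial_tf,\,g\bigr)$, and the second piece involves $\partial_t f$, which is not in $L^2$ under the hypotheses and cannot be removed by a further integration by parts (the operators $\partial_t$ and $v\cdot\nabla_x$ are both skew and commute with the multiplier, so no sign or total-derivative miracle occurs). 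The claimed cancellations are asserted, not exhibited.

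The paper's device is different and is what actually makes the argument close. One restricts from the outset to the region $\langle v\wedge k\rangle\gtrsim\langle k\rangle$ by inserting a smooth cutoff $\tilde\phi(v,k)=\phi(\langle v\wedge k\rangle\langle k\rangle^{-1})$; on the complementary region $\langle v\wedge k\rangle\lesssim\langle k\rangle$ the quantity $\|\langle v\rangle^{\gamma/3}\langle v\wedge D_x\rangle^{2/3}f\|$ is dominated by $\|\langle v\rangle^{\gamma/3}\langle D_x\rangle^{2/3}f\|$, already handled by Lemma~\ref{lemme-2}. On the support of $\tilde\phi$ the crude bound $|v\cdot k|\le|v|\,|k|$ together with $|v\wedge k|^{-\alpha}\lesssim|k|^{-\alpha}$ converts every commutator remainder into a term of size $\langle v\rangle^{2\gamma/3+1}|k|^{1/3}$, which factors as $\langle v\rangle^{\gamma/3}|k|^{2/3}$ times $\langle v\rangle^{\gamma+?}$ and is then controlled by $\|\langle v\rangle^{\gamma/3}|D_x|^{2/3}f\|^{1/2}\|\langle v\rangle^{\gamma+2}f\|^{1/2}\|g\|$ (this is the origin of the weight $\langle v\rangle^{\gamma+2}$). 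The $\psi'$ produced by differentiating the cutoff localises to $|v\wedge k|\sim|k|$ and is handled the same way. In short: the $v\cdot k$ factor is never reabsorbed through the equation; it is traded against the frequency localisation, and the resulting $|D_x|^{2/3}$ pieces are closed via Lemma~\ref{lemme-2}. That localisation step is the missing idea in your sketch.
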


%

\begin{proof} We want to estimate 
$$\| <v>^{\gamma\over 3} <v\wedge D_x > ^{2\over 3} f\|^2 = \| <v>^{\gamma\over 3} <v\wedge k >^{2\over 3} \hat f \|^2$$
and, in view of Lemma \ref{lemme-2}, it is enough to work for $<v\wedge k>\geq < k>$, ie $<v\wedge k> <k>^{-1} \geq 1$ (using Parseval relation w.r.t. variable $x$). Let $\phi$ be a positive function which is $0$ for small values and $1$ for large values. To simplify notations, write $\tilde\phi (v,k) = \phi (<v\wedge k> <k>^{-1})$. Then we need to estimate
 $$\| <v>^{\gamma\over 3} \tilde\phi (v,D_x) <v\wedge D_x > ^{2\over 3} f\|^2 .$$

We write
$$\| <v>^{\gamma\over 3} \tilde\phi (v,D_x) <v\wedge D_x > ^{2\over 3} f\|^2  = ( <v>^{\gamma\over 3}  \tilde\phi (v,D_x) <v\wedge D_x > ^{2\over 3} \bar f, <v>^{\gamma\over 3}  \tilde\phi (v,D_x) <v\wedge D_x > ^{2\over 3}  f)$$
$$(<v>^{2\gamma\over 3} \tilde\phi^2 (v,D_x) <v\wedge D_x>^{2\over 3} \bar f, f) = (<v>^{2\gamma\over 3} \tilde\phi^2 (v,D_x) <v\wedge D_x>^{-2\over 3} <v\wedge D_x>^2 \bar f ,f)$$
$$ = (<v>^{2\gamma\over 3} \tilde\phi^2 (v,D_x) <v\wedge D_x>^{-2\over 3}  \bar f ,f) + (<v>^{2\gamma\over 3} \tilde\phi^2 (v,D_x) <v\wedge D_x>^{-2\over 3} |v\wedge D_x|^2 \bar f ,f) .$$

Note that the first term is (for example) bounded by $\| <v>^{\gamma\over 3} f\|^2$. So we concentrate on the second one
$$Imp = (<v>^{2\gamma\over 3} \tilde\phi^2 (v,D_x) <v\wedge D_x>^{-2\over 3} |v\wedge D_x|^2 \bar f ,f)$$
$$ = - ( (<v>^{2\gamma\over 3} \tilde\phi^2 (v,D_x) <v\wedge D_x>^{-2\over 3} v\wedge D_x \bar f ,v\wedge D_x f).$$

Now we use the fact that $\nabla_x f = \nabla_v g - v.\nabla_x \nabla_v f$, and therefore $v\wedge \nabla_x f = (v\wedge \nabla_v g) - v.\nabla_x (v\wedge \nabla_v f)$. Thus
$$Imp = - (<v>^{2\gamma\over 3}\tilde\phi^2 (v,D_x) |v\wedge D_x|^{-2\over 3} v\wedge \nabla_v\bar g , v\wedge \nabla_x f) + (<v>^{2\gamma\over 3}\tilde\phi^2 (v,D_x) |v\wedge D_x|^{-2\over 3} v.\nabla_x (v\wedge \nabla_v \bar f ), v\wedge \nabla_x f )$$
$$= - (<v>^{2\gamma\over 3}\tilde\phi^2 (v,D_x) |v\wedge D_x|^{-2\over 3} v\wedge \nabla_v\bar g , v\wedge \nabla_x f) - (<v>^{2\gamma\over 3}\tilde\phi^2 (v,D_x) |v\wedge D_x|^{-2\over 3} v\wedge \nabla_v \bar f , v\wedge \nabla_x g ) .$$

Set $S = <v>^{2\gamma\over 3}\tilde\phi^2 (v,D_x) |v\wedge D_x|^{-2\over 3}$.Then
$$ Imp =  - (S v\wedge \nabla_v\bar g , v\wedge \nabla_x f) - (S v\wedge \nabla_v \bar f , v\wedge \nabla_x g ) .$$

We introduce some notations (though there is also another line of proof which avoids such notations. However, the arguments are simple enough). Let $e_j$ be the canonical basis of $\RR^3$. Then we can write
$$v\wedge \nabla_x F = (v\wedge \nabla_x F).e_j e_j = - [(v\wedge e_j).\nabla_x F ]e_j .$$

Setting  $X_j =X_j (v,\partial_x) F = (v\wedge e_j).\nabla_x F$, we get $v\wedge \nabla_x F = - X_j (F) e_j$. Similarly, we can write
$$v\wedge\nabla_v G = (v\wedge \nabla_v G).e_j e_j = -(v\wedge e_j).\nabla_vG e_j .$$

Letting $V_j = V_j (v,\partial_v) G = (v\wedge e_j).\nabla_vG$, then $v\wedge\nabla_v G = - V_j (G)e_j$. Note that $X^\ast_j = -X_j$ and $V^\ast_j = -V_j$. Now we can write (with summation of indices)
\begin{equation}\label{ineq-2} \left\{\begin{array}{c} 
 Imp =  - (S V_j (\bar g) . X_j (f)) - (S V_j (\bar f ) . X_j (g) ) =  - (S V_j (\bar g) . X_j (f)) + (X_jS V_j (\bar f ) . g )\\ 
=   (\bar g . V_jSX_j (f)) + (SX_j V_j (\bar f ) . g ) = 2\re (\bar g , SX_jV_j (f)) + (\bar g , [V_j , SX_j] (f)) \\
 = Imp_1 +Imp_2 . \end{array}\right.
 \end{equation}

Let us look to $Imp_2$. We use Fourier transform w.r.t. $x$ variables. Then (summation over j)
$$\widehat{[V_j , SX_j] (f)} = i V_ j [ S (k,v) (v\wedge e_j).k \hat f(k)] - iS(k,v) (v\wedge e_j). k (V_j (\hat f))$$
$$ = i V_ j [ S (k,v) (v\wedge e_j).k ]\hat f(k)$$
where
$$S(k,v)= <v>^{2\gamma\over 3} \phi^2 (<v\wedge k> <k>^{-1} ) |v\wedge k|^{-2\over 3} .$$

Set below $\psi =\phi^2$ also. Note that $(v\wedge e_j).k = (e_j\wedge k).v$. Then $\nabla_v [(v\wedge e_j).k ] = \nabla_v [(e_j\wedge k).v ] = e_j\wedge k$ and $V_j [v\wedge e_j).k] = (v\wedge e_j).(e_j\wedge k).$. Note that $V_j ((e_j\wedge k).v ) = (v\wedge e_j ). (e_j \wedge k) = -(v\wedge (e_j \wedge k)) .e_j$ and $v\wedge (e_j\wedge k) = (v.k).e_j - (v.e_j)k$. Thus $-V_j ((e_j\wedge k).v )  = (v.k) - (v.e_j)(k.e_j) =0$. 

Next $\nabla_v <v>^{2\gamma\over 3} \sim <v>^{{2\gamma\over 3} -2} v$, and therefore $V_j \nabla_v <v>^{2\gamma\over 3} =0$. On the whole
$$V_ j [ S (k,v) (v\wedge e_j).k ]\ = <v>^{2\gamma\over 3} (v\wedge e_j).k V_j [ \psi (<v\wedge k> <k>^{-1} )) |v\wedge k|^{-2\over 3}] .$$

Now note that $ |v\wedge k|^{-2\over 3} = (|v|^2|k|^2 - (v.k)^2 )^{-1\over 3}$, and thus
$$\nabla_v [|v\wedge k|^{-2\over 3} ] = - {1\over 3} (|v|^2|k|^2 - (v.k)^2 )^{-4\over 3} [ |k|^2 2v - 2(v.k) k] \mbox{ and } V_j  [|v\wedge k|^{-2\over 3} ]  = {2\over 3}  |v\wedge k|^{-8\over 3} (v.k) (v\wedge e_j).k .$$

Finally
$$\nabla_v [ \psi (<v\wedge k> <k>^{-1} )) ] = \psi '( <v\wedge k> <k>^{-1} )) <k>^{-1} \nabla_v [ 1+ |v\wedge k|^2]^{1\over 2}$$
$$ = \psi '( <v\wedge k> <k>^{-1} )) <k>^{-1} [ 1+ |v\wedge k|^2]^{-1\over 2} [ |k|^2 2v - 2(v.k) k] .$$
Thus
$$V_j  [ \psi (<v\wedge k> <k>^{-1} )) ]  =-2  \psi '( <v\wedge k> <k>^{-1} )) <k>^{-1} <v\wedge k>^{-1} (v.k) (v\wedge e_j).k$$
and at last
\begin{equation}\label{ineq-3} \left\{ \begin{array}{c}
V_ j [ S (k,v) (v\wedge e_j).k ] 
 = <v>^{2\gamma\over 3} \psi (<v\wedge k> <k>^{-1} )) {2\over 3}  |v\wedge k|^{-8\over 3} (v.k) |v\wedge k|^2 \\ 
 -2 <v>^{2\gamma\over 3} \psi '( <v\wedge k> <k>^{-1} )) <k>^{-1} <v\wedge k>^{-1} (v.k)  |v\wedge k|^2  |v\wedge k|^{-2\over 3} . \end{array}\right.
 \end{equation}

By definition of $\psi '$, the second term in \eqref{ineq-3} is bounded by (because $|v\wedge k|\sim |k|$) $ <v>^{2{\gamma\over 3} +1} <k>^{1\over 3}$, and therefore going back we have a contribution to $Imp_2$ as $\| <v>^{2{\gamma\over 3} +1} <k>^{1\over 3} \hat f \| \ \| g\|$. Since
$$\| <v>^{2{\gamma\over 3} +1} <k>^{1\over 3} \hat f \|^2 = (<v>^{4{\gamma\over 3} +2} <k>^{2\over 3} \hat f , \hat f) $$
$$= (<v>^{\gamma\over 3} <k>^{2\over 3}  \hat f , <v>^{\gamma +2} \hat f) \lesssim \| <v>^{\gamma\over 3} |D_x|^{2\over 3} f \| \ \| <v>^{\gamma +2} f\|$$

the second term in \eqref{ineq-3} gives a contribution to $Imp_2$ as
$$ \| <v>^{\gamma\over 3} |D_x|^{2\over 3} f \|^{1\over 2}\  \| <v>^{\gamma +2} f\|^{1\over 2}  \ \|g\| .$$

For the first term in \eqref{ineq-3}, it is also bounded by the same form. All in all, we have shown that
\begin{equation}\label{ineq-4}
Imp_2 \lesssim  \| <v>^{\gamma\over 3} |D_x|^{2\over 3} f \|^{1\over 2} \ \| <v>^{\gamma +2} f\|^{1\over 2}  \ \|g\| .
\end{equation}

Now we can turn to $Imp_1$ from \eqref{ineq-2} to get first of all
$$Imp_1 \lesssim \| g\| \  \| SX_j V_j (f) \| .$$
Then, it follows that
\begin{equation}\label{ineq-5} \left\{ \begin{array}{c}
 \| SX_j V_j (f) \|^2 = ( SX_j V_j (f), SX_j V_j (f) )= -(S^2 X_j^2 V_j (f), V_j(f)) \\ 
= -([S^2 X_j^2, V_j] (f), V_j(f)) + (S^2X_j^2 (f) , V_j^2 (f) =A_1+B_1 . \end{array}\right.
\end{equation}

$B_1$ is estimated as follows
$$B_1 \lesssim \| <v>^{-\gamma}S^2X_j^2 (f)  \| \ \| <v>^\gamma V_j^2 (f)\| \lesssim \| <v>^{\gamma\over 3} |v\wedge D_x|^{2\over 3} f\| \  \| <v>^\gamma |v\wedge D_v|^2 f\| .$$

For $A_1$, this is again a commutator estimation: by Fourier transform w.r.t. $x$, we have, using the previous computations
$$ \widehat{ [ S^2X_j^2 ,V_j] (f) } = -iS(k,v) (v\wedge e_j).k) V_j [ S(k,v) (v\wedge e_j).k ] \hat f (k)$$
$$= -i <v>^{2\gamma\over 3} \phi^2 (<v\wedge k><k>^{-1} ) |v\wedge k|^{-2\over 3} (v\wedge k).e_j V_j [ S(k,v) (v\wedge e_j).k ] \hat f (k)$$
with
$$V_ j [ S (k,v) (v\wedge e_j).k ]\ = <v>^{2\gamma\over 3} (v\wedge e_j).k V_j [ \psi (<v\wedge k> <k>^{-1} )) |v\wedge k|^{-2\over 3}]$$
$$ = <v>^{2\gamma\over 3} \psi (<v\wedge k> <k>^{-1} )) {2\over 3}  |v\wedge k|^{-8\over 3} (v.k) |v\wedge k|^2 $$
$$ -2 <v>^{2\gamma\over 3} \psi '( <v\wedge k> <k>^{-1} )) <k>^{-1} <v\wedge k>^{-1} (v.k)  |v\wedge k|^2  |v\wedge k|^{-2\over 3} .$$

Thus we have two contributions, the first one being given by
$$A_{11} = -i <v>^{2\gamma\over 3} \phi^2 (<v\wedge k><k>^{-1} ) |v\wedge k|^{-2\over 3} (v\wedge k).e_j <v>^{2\gamma\over 3} \psi (<v\wedge k> <k>^{-1} )) {2\over 3}  |v\wedge k|^{-8\over 3} (v.k) |v\wedge k|^2$$
$$\simeq i <v>^{4\gamma\over 3} \phi^4 (<v\wedge k><k>^{-1} ) |v\wedge k|^{-4\over 3} (v\wedge k).e_j (v.k)$$
which is bounded from above by $<v>^{4{\gamma\over 3}+1} |k|^{2\over 3}$. Thus we may write
$$A_{11} \simeq i <v>^{{\gamma\over 3} -1} \phi^4 (<v\wedge k><k>^{-1} ) |v\wedge k|^{-4\over 3} (v\wedge k).e_j (v.k) \times <v>^{\gamma +1}$$
and the contribution given by $A_{11}$ is estimated by
$$\| <v>^{\gamma\over 3} |D_x|^{2\over 3} f\| \ \| <v>^{\gamma +1} (v\wedge D_v )f\| .$$
Then
$$ \| <v>^{\gamma +1} (v\wedge D_v )f\|^2 = (<v>^{\gamma} |v\wedge D_v|^2 f , <v>^{\gamma +2}f ) \lesssim \| <v>^\gamma |v\wedge D_v|^2 f \| \ \| <v>^{\gamma +2} f\|$$
and therefore the contribution by $A_{11}$ is estimated by
$$\| <v>^{\gamma\over 3} |D_x|^{2\over 3} f\| \  \| <v>^\gamma |v\wedge D_v|^2 f \|^{1\over 2} \  \| <v>^{\gamma +2} f\|^{1\over 2} .$$

Now we turn to the other contribution in $A_1$. We have
$$A_{12} \simeq 2i <v>^{2\gamma\over 3} \phi^2 (<v\wedge k><k>^{-1} ) |v\wedge k|^{-2\over 3} (v\wedge k).e_j  <v>^{2\gamma\over 3} \psi '( <v\wedge k> <k>^{-1} ))$$
$$\times <k>^{-1} <v\wedge k>^{-1} (v.k)  |v\wedge k|^2  |v\wedge k|^{-2\over 3}$$
$$\simeq 2i <v>^{4\gamma\over 3} \phi^2 (<v\wedge k><k>^{-1} )\psi '( <v\wedge k> <k>^{-1} )) <v\wedge k>^{-1}<k>^{-1} (v.k) (v\wedge k).e_j |v\wedge k|^{2\over 3} $$
$$\simeq 2i <v>^{{\gamma\over 3}-1} \phi^2 (<v\wedge k><k>^{-1} )\psi '( <v\wedge k> <k>^{-1} )) <v\wedge k>^{-1}<k>^{-1} (v.k) (v\wedge k).e_j |v\wedge k|^{2\over 3}. <v>^{\gamma +1}$$
Then, the contribution by this term is estimated by (or by $|D_x|$ instead of $v\wedge D_x$)
$$ \| <v>^{\gamma\over 3} |v\wedge D_x|^{2\over 3} f\|\  \| <v>^{\gamma +1} v\wedge D_v f\|$$
and therefore the total contribution by $A_{11}+A_{12}$ gives
$$A_1\lesssim \| <v>^{\gamma\over 3} |D_x|^{2\over 3} f\| \  \| <v>^\gamma |v\wedge D_v|^2 f \|^{1\over 2} \ \| <v>^{\gamma +2} f\|^{1\over 2} .$$

Thus 
$$A_1+B_1 \lesssim  \| <v>^{\gamma\over 3} |D_x|^{2\over 3} f\| \ \bigg\{ \| <v>^\gamma |v\wedge D_v|^2 f \|^{1\over 2} \  \| <v>^{\gamma +2} f\|^{1\over 2} + \| <v>^\gamma |v\wedge D_v|^2 f \|\bigg\}$$ 
which gives
$$Imp_1 \lesssim \| g\| \Bigg\{  \| <v>^{\gamma\over 3} |D_x|^{2\over 3} f\| \  \bigg\{ \| <v>^\gamma |v\wedge D_v|^2 f \|^{1\over 2} \  \| <v>^{\gamma +2} f\|^{1\over 2} + \| <v>^\gamma |v\wedge D_v|^2 f \| \bigg\} \Bigg\}^{1\over 2}$$
that is
$$Imp_1 \lesssim  \| g\|  \ \| <v>^{\gamma\over 3} |D_x|^{2\over 3} f\|^{1\over 2} \  \bigg\{ \| <v>^\gamma |v\wedge D_v|^2 f \|^{1\over 4} \  \| <v>^{\gamma +2} f\|^{1\over 4} + \| <v>^\gamma |v\wedge D_v|^2 f \|^{1\over 2} \bigg\} $$
and therefore
$$Imp \lesssim \| g\|  . \| <v>^{\gamma\over 3} |D_x|^{2\over 3} f\|^{1\over 2} \ \bigg\{ \| <v>^\gamma |v\wedge D_v|^2 f \|^{1\over 4} \  \| <v>^{\gamma +2} f\|^{1\over 4} + \| <v>^\gamma |v\wedge D_v|^2 f \|^{1\over 2} + \| <v>^{\gamma +2} f\|^{1\over 2}\bigg\} .$$

Thus
$$\| <v>^{\gamma\over 3} \tilde \phi (v,D_v) <v\wedge D_x>^{2\over 3} f\| \lesssim \| <v>^{\gamma\over 3} f\| + $$
$$+ \| g\|^{1\over 2}  \ \| <v>^{\gamma\over 3} |D_x|^{2\over 3} f\|^{1\over 4} \  \bigg\{\| <v>^\gamma |v\wedge D_v|^2 f \|^{1\over 8} \ \| <v>^{\gamma +2} f\|^{1\over 8} + \| <v>^\gamma |v\wedge D_v|^2 f \|^{1\over 4} + \| <v>^{\gamma +2} f\|^{1\over 4}\bigg\} $$
and all in all
$$\| <v>^{\gamma\over 3} <v\wedge D_x>^{2\over 3} f\| \lesssim \| <v>^{\gamma\over 3} f\| + \| <v>^{\gamma\over 3} |D_x|^{2\over 3} f\| $$
$$+ \| g\|^{1\over 2}  \  \| <v>^{\gamma\over 3} |D_x|^{2\over 3} f\|^{1\over 4} \  \bigg\{ \| <v>^\gamma |v\wedge D_v|^2 f \|^{1\over 8} \  \| <v>^{\gamma +2} f\|^{1\over 8} + \| <v>^\gamma |v\wedge D_v|^2 f \|^{1\over 4} + \| <v>^{\gamma +2} f\|^{1\over 4}\bigg\} $$
that is also
$$\| <v>^{\gamma\over 3} <v\wedge D_x>^{2\over 3} f\| \lesssim \| <v>^{\gamma\over 3} f\| + \| <v>^{\gamma\over 3} |D_x|^{2\over 3} f\| $$
$$+ \| g\|^{1\over 2}  \  \| <v>^{\gamma\over 3} |D_x|^{2\over 3} f\|^{1\over 4} \  \bigg\{  \| <v>^\gamma |v\wedge D_v|^2 f \|^{1\over 4} + \| <v>^{\gamma +2} f\|^{1\over 4}\bigg\} .$$

Using Lemma \ref{lemme-2}, it follows that
$$\| <v>^{\gamma\over 3} <v\wedge D_x>^{2\over 3} f\| \lesssim \| <v>^{\gamma\over 3} f\| + \bigg\{  \| <v>^{\gamma -1} f\|^{1\over 3} + \|<v>^\gamma |D_v|^2 f\|^{1\over 3} \bigg\} \| g\|^{2\over 3}$$
$$+ \| g\|^{2\over 3}  \  \bigg\{  \| <v>^{\gamma -1} f\|^{1\over {12}} + \|<v>^\gamma |D_v|^2 f\|^{1\over {12}} \bigg\} \  \bigg\{ \| <v>^\gamma |v\wedge D_v|^2 f \|^{1\over {4}} + \| <v>^{\gamma +2} f\|^{1\over 4} \bigg\}$$
and thus
$$\| <v>^{\gamma\over 3} <v\wedge D_x>^{2\over 3} f\| \lesssim \| <v>^{\gamma\over 3} f\| + $$
$$ + \| g\|^{2\over 3}\  \bigg\{  \| <v>^{\gamma +2} f\|^{1\over 3} + \|<v>^\gamma |D_v|^2 f\|^{1\over 3} + \|<v>^\gamma |v\wedge D_v|^2 f\|^{1\over 3} \bigg\} ,$$
which concludes the proof.

\end{proof}

\section{Scalar Products between elements of $\cL$ and the transport part}

The main idea is to get an estimate on the square of the norms of each $\cL_i$, and then conclude with the Lemma from the previous sections.

\smallskip

{\bf First Step}

 We shall first of all start first by getting an estimate on $\| \cL_3 f \|$, that is on  $\| <v>^{\gamma +2}f\|$, which is the easiest to obtain. It will be also helpful in order to control other scalar products.

\begin{lemm}\label{lemme-4} We have
$$\| <v>^{\gamma +2} f\| \lesssim \| h\| +\| f\| .$$
\end{lemm}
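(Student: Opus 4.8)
The plan is a weighted energy estimate: multiply \eqref{eq-2} by $<v>^{\gamma+2}f$ and integrate over all variables $(t,x,v)$. Since the weight depends only on $v$, the transport part drops out, because $(\partial_t f+v.\nabla_x f,<v>^{\gamma+2}f)=\frac12\int<v>^{\gamma+2}(\partial_t+v.\nabla_x)(f^2)=0$ after integrating by parts in $t$ and $x$. Hence $(\cL f,<v>^{\gamma+2}f)=(h,<v>^{\gamma+2}f)$, and it remains to analyze the three pieces of $\cL$ on the left.

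Next I would integrate by parts in $v$. For $\cL_1$ one gets $(\cL_1 f,<v>^{\gamma+2}f)=\int\lambda<v>^{\gamma+2}|\nabla_v f|^2-\frac12\int\big(\nabla_v.(\lambda\nabla_v<v>^{\gamma+2})\big)f^2$; the first term is nonnegative, and since $\nabla_v<v>^{\gamma+2}=(\gamma+2)<v>^{\gamma}v$, the hypotheses \eqref{hyp-2} (only first derivatives of $\lambda$ are needed) give $|\nabla_v.(\lambda\nabla_v<v>^{\gamma+2})|\lesssim<v>^{2\gamma}$. For $\cL_2$ the crucial observation is that $v\wedge\nabla_v<v>^{\gamma+2}=(\gamma+2)<v>^{\gamma}(v\wedge v)=0$, so the cross term vanishes identically and $(\cL_2 f,<v>^{\gamma+2}f)=\int\mu<v>^{\gamma+2}|v\wedge\nabla_v f|^2\ge 0$. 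For $\cL_3$, \eqref{hyp-1} yields $(\cL_3 f,<v>^{\gamma+2}f)=\int F<v>^{\gamma+2}f^2\gtrsim\|<v>^{\gamma+2}f\|^2$. Collecting these, and discarding the nonnegative terms with the right sign, $\|<v>^{\gamma+2}f\|^2\lesssim(h,<v>^{\gamma+2}f)+\int<v>^{2\gamma}f^2$.

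Finally I would absorb the error terms. Since $2\gamma<2(\gamma+2)$ for every admissible $\gamma$, Young's inequality gives $\int<v>^{2\gamma}f^2\le\varepsilon\|<v>^{\gamma+2}f\|^2+C_\varepsilon\|f\|^2$ (when $\gamma\le 0$ this is trivial, as $<v>^{2\gamma}\le 1$), and likewise $(h,<v>^{\gamma+2}f)\le\varepsilon\|<v>^{\gamma+2}f\|^2+C_\varepsilon\|h\|^2$. Choosing $\varepsilon$ small enough to move the $\varepsilon\|<v>^{\gamma+2}f\|^2$ contributions to the left yields $\|<v>^{\gamma+2}f\|^2\lesssim\|h\|^2+\|f\|^2$, hence $\|<v>^{\gamma+2}f\|\lesssim\|h\|+\|f\|$.

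I do not expect a genuine obstacle here; this is the ``easiest'' estimate, as announced. The only points to keep in mind are the two algebraic facts that make it work: the transport operator is skew-symmetric against a $v$-only weight, and $v\wedge\nabla_v$ annihilates radial functions, so $\cL_2$ produces no lower-order error at all. One should of course justify the integrations by parts, which is legitimate under the standing smoothness/decay assumptions (the estimate being understood a priori); alternatively one inserts a cut-off $\chi(v/R)$ in the test function, repeats the computation, and lets $R\to\infty$, all the extra terms being controlled by the right-hand side uniformly in $R$.
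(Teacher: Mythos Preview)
Your proof is correct and follows the same route as the paper: multiply by $<v>^{\gamma+2}f$, use that the transport part is skew against a $v$-only weight, drop the nonnegative contributions from $\cL_1$ and $\cL_2$ (the latter producing no cross term since $v\wedge\nabla_v$ kills radial functions), bound the $\cL_1$ commutator error by $\int<v>^{2\gamma}f^2$, and absorb via $(<v>^{\gamma+2}f,<v>^{\gamma-2}f)\le\varepsilon\|<v>^{\gamma+2}f\|^2+C_\varepsilon\|f\|^2$. The only cosmetic difference is that you symmetrize the $\cL_1$ cross term in one step as $-\tfrac12\int\nabla_v\!\cdot(\lambda\nabla_v<v>^{\gamma+2})\,f^2$, whereas the paper writes it as $(\tilde\lambda\,v\!\cdot\!\nabla_v f,f)$ and then antisymmetrizes; both yield the same $<v>^{2\gamma}$ bound.
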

\begin{proof}
We take the equation, multiply by $<v>^{\gamma +2}f$ (or by $\cL_3 f$) and integrate to get
\begin{equation}\label{ineq-6}
(\lambda (v) \nabla_v f , \nabla_v (<v>^{\gamma +2} f)) + (\mu(v) (v\wedge \nabla_v f) , (v\wedge \nabla_v (<v>^{\gamma +2} f))) + \| <v>^{\gamma +2}f\|^2 \lesssim \| h\| \ \| <v>^{\gamma +2}f \| .
\end{equation}
and we consider the first and second terms on the l.h.s. of this inequality, denoted by $J$ and $K$ respectively, that we need to bound from above (and removing any positive contribution). 

The first term on the l.h.s. of \eqref{ineq-6} is
$$J= (\lambda (v) \nabla_v f , \nabla_v (<v>^{\gamma +2} f)) = - ( <v>^{\gamma +2} \nabla_v . (\lambda (v) \nabla_v f) ,f)$$
$$ = - ( \nabla_v . (<v>^{\gamma +2}  \lambda (v) \nabla_v f) ,f) + (\lambda (v) \nabla_v <v>^{\gamma +2}  \nabla_v f ,f) .$$

We can forget the first term because it is positive, i.e. let
$$J \simeq (\lambda (v) \nabla_v <v>^{\gamma +2} , \nabla_v f) \simeq ( \tilde\lambda v.\nabla_v f ,f)$$
with $\tilde \lambda = \lambda <v>^{\gamma}$. Now
$$( \tilde\lambda v.\nabla_v f ,f) = - (f, \nabla_v . (v\tilde \lambda f)) = - (f, \nabla_v . (v\tilde \lambda ) f) - ( \tilde\lambda v.\nabla_v f ,f) .$$
and thus
$$J \simeq  (f, \nabla_v . (v\tilde \lambda ) f) \lesssim ( <v>^{2\gamma } f ,f) = (<v>^{\gamma +2} f , <v>^{\gamma -2}f ) \lesssim \varepsilon \| <v>^{\gamma +2} f\|^2 +C_\varepsilon \| f\|^2 .$$

Similarly, the second term on the l.h.s. of \eqref{ineq-6}  is
$$K=  (\mu(v) (v\wedge \nabla_v f) , (v\wedge \nabla_v (<v>^{\gamma +2} f)))$$

that we can write as
$$ K = ( \mu (v) V_j f ,V_j (<v>^{\gamma +2} f))  = ( \mu (v) V_j f ,V_j [<v>^{\gamma +2}] f) + ( \mu (v) V_j f ,<v>^{\gamma +2}V_j[ f])$$
and again we can forget the second term since it is positive, to write
$$K\simeq ( \mu (v) V_j f ,V_j [<v>^{\gamma +2}] f) .$$

Since  $\beta_j = \mu V_j [<v>^{\gamma +2}] =0$, it follows that
$$K \simeq (\beta_j V_j f,f) = - (f, V_j [\beta_j f)) = - (f, \beta_j V_j f)) - (f, V_j [\beta_j] f)$$

and therefore $K\simeq (f, V_j [\beta_j] f) =0$. In view of the estimates on $J$ and $K$ just obtained, we can go back to \eqref{ineq-6}, ending the proof.

\end{proof}

As a corollary of the proof, note that we have also (though we do not use it)
\begin{equation}\label{estimations-1}
\| <v>^{\gamma +1} (v\wedge \nabla_v f) \| + \| <v>^{\gamma +1} \nabla_v f \| \lesssim \| h\| +\| f\| .
\end{equation}

\smallskip

{\bf Step 2: A preliminary inequality} 

Below, we set $g= h -\cL (f)$. We start from
$$\cL (f) = -\partial_t f -v.\nabla_x f +h .$$

Then
$$ \| \cL (f)\|^2 = - (v.\nabla_x f , \cL (f) ) + (h, \cL (f))$$
$$ \lesssim (v.\nabla_x f , \cL (f) )  + \| h\|\  \|\cL (f)\|$$
By expanding the square, and using Holder inequality with a parameter $\varepsilon$, we obtain (recall that we have already obtained a control for $\| \cL_3 f\|^2$ from Lemma \ref{lemme-4})
\begin{equation}\label{step-0}
2(\cL_1 (f) , \cL_2 (f) + \| \cL_1 f\|^2 + \| \cL_2 f\|^2 \lesssim (v.\nabla_x f , \cL_1 (f) +\cL_2 (f) ) + \| h\|^2 + \| f\|^2  .
\end{equation}

{\bf Step 3: Scalar product with the transport operator}

Now we compute the scalar product of $v.\nabla_x f$ with $\cL_1(f) +\cL_2 (f)$ which appears in \eqref{step-0}, for which we have

\begin{lemm}\label{lemme-5} With the above notations, we have
\begin{equation}\left\{\begin{array}{c} 
(v.\nabla_x f ,\cL_1 (f) +\cL_2 (f) )\lesssim \| f\|^2 + \| h\|^2 + \\
 \| g\|\  \bigg\{  \| f\|^{1\over 2} + \| <v>^{\gamma +2} f\|^{1\over 2} + \|<v>^\gamma |D_v|^2 f\|^{1\over 2} + \|<v>^\gamma |v\wedge D_v|^2 f\|^{1\over 2}  \bigg\} \end{array}\right.
 \end{equation}
where again recall that $g = h- \cL(f)$.
\end{lemm}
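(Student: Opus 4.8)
The plan is to compute the scalar product $(v.\nabla_x f, \cL_1(f) + \cL_2(f))$ by integrating by parts to move the $x$-derivative onto the other factor, thereby converting $v.\nabla_x f$ into terms involving $\nabla_x$ of velocity-derivatives of $f$, which we can then rewrite using the transport equation $\partial_t f + v.\nabla_x f = g$. Concretely, for $\cL_1(f) = -\nabla_v.(\lambda(v)\nabla_v f)$ I would write $(v.\nabla_x f, \cL_1(f)) = (\nabla_v f, \lambda(v)\nabla_v(v.\nabla_x f))$ up to harmless terms, and note $\nabla_v(v.\nabla_x f) = \nabla_x f + v.\nabla_x \nabla_v f$. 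The term $v.\nabla_x\nabla_v f$ is, by the transport equation applied to $\nabla_v f$, equal to $\nabla_v g - \nabla_x f$ modulo a $\partial_t$ term, so the dangerous $v.\nabla_x$ contribution telescopes against the $\nabla_x f$ term, leaving expressions controlled by $\|\nabla_v f\|$ (weighted), $\|g\|$, and $\|\nabla_x f\|$-type quantities. The $\nabla_x f$ quantities are then handled by Lemma \ref{lemme-2}: $\|<v>^{\gamma/3}|D_x|^{2/3}f\|$ is bounded by $\{\|f\| + \|<v>^\gamma|D_v|^2 f\|\}^{1/3}\|g\|^{2/3}$, and intermediate powers of $D_x$ are interpolated between this and $\|f\|$.

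For the $\cL_2$ piece I would proceed identically using the vector fields $X_j = (v\wedge e_j).\nabla_x$ and $V_j = (v\wedge e_j).\nabla_v$ introduced in the proof of Lemma \ref{lemme-3}, together with their skew-adjointness $X_j^* = -X_j$, $V_j^* = -V_j$. The key algebraic fact is that $\cL_2(f) = -(v\wedge\nabla_v).\mu(v)(v\wedge\nabla_v f)$ rewrites as $V_j(\mu V_j f)$ (with summation), and the relevant commutator is $[V_j, X_j]$ applied after transport — here one uses $v\wedge\nabla_x f = v\wedge\nabla_v g - v.\nabla_x(v\wedge\nabla_v f)$, exactly as in Lemma \ref{lemme-3}. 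This produces, after integration by parts, terms controlled by $\|<v>^\gamma |v\wedge D_v|^2 f\|$, $\|<v>^{\gamma+2}f\|$ (available from Lemma \ref{lemme-4}), $\|g\|$, and $\|<v>^{\gamma/3}<v\wedge D_x>^{2/3}f\|$; the last is handled by Lemma \ref{lemme-3}. Throughout, weights are tracked using \eqref{hyp-1}, \eqref{hyp-2}, and crude bounds like $<v>^{2\gamma} \le <v>^{\gamma+2}<v>^{\gamma-2}$ together with Young's inequality to absorb $\varepsilon\|<v>^{\gamma+2}f\|^2$-type terms (already controlled) and to split products into the half-power form appearing in the statement.

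The main obstacle is bookkeeping the commutators of the weight $<v>^{2\gamma/3}$, the cutoff $\tilde\phi$, and the Fourier multipliers $|D_x|^{s}$ or $<v\wedge D_x>^{s}$ against the vector fields $v.\nabla_x$, $X_j$, $V_j$ — one must check that each commutator term has a weight no worse than what Lemmas \ref{lemme-2}–\ref{lemme-4} can absorb, i.e. that no genuine loss of derivatives or of decay occurs. The structural reason it works is the same "half-derivative gain from transport plus telescoping of $v.\nabla_x$" mechanism already used twice above, so the argument is a matter of applying it once more with the coefficients $\lambda$, $\mu$ present and invoking \eqref{hyp-2} for the first- and second-order derivatives of these coefficients. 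A secondary point to be careful about is that several terms genuinely need the bound on $\|<v>^{\gamma+2}f\|$ from Lemma \ref{lemme-4}, so that must be cited rather than re-derived; and the final inequality should be stated with the products already separated by Young's inequality into the form displayed in Lemma \ref{lemme-5}.
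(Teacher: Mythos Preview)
Your telescoping idea is algebraically correct but circular. Carrying it out literally gives
\[
(v\cdot\nabla_x f,\ \cL_1 f)
= (\nabla_v g,\ \lambda\nabla_v f) - (\partial_t\nabla_v f,\ \lambda\nabla_v f)
= (g,\ \cL_1 f),
\]
since the $\partial_t$ term integrates to zero; likewise $(v\cdot\nabla_x f,\ \cL_2 f) = (g,\ \cL_2 f)$. But $g = h - \cL(f)$, so the bound $|(g,\cL_1 f + \cL_2 f)| \le \|g\|\,(\|\cL_1 f\| + \|\cL_2 f\|)$ puts $\|\cL_1 f + \cL_2 f\|^2$ back on the right-hand side of \eqref{step-0} with a non-small constant, and nothing can be absorbed. (Equivalently, substituting the identity directly into the derivation of \eqref{step-0} collapses it to the tautology $(\cL f,\cL_3 f) = (h,\cL_3 f)$.) The \emph{half}-powers in the statement of Lemma~\ref{lemme-5} are there precisely to break this circularity, and your route does not produce them.

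The paper's proof proceeds differently. After one integration by parts it arrives at
\[
J_1 = (\nabla_x f,\ \lambda\nabla_v f),\qquad
J_2 = -\,(v\wedge\nabla_x f,\ \mu\,(v\wedge\nabla_v f)).
\]
The crucial ingredient you are missing is then an \emph{auxiliary energy estimate}: for $J_1$ one applies $|D_x|^{1/3}$ to the equation, multiplies by $\lambda\langle v\rangle^{-2\gamma/3}\,|D_x|^{1/3}f$, and integrates. This produces a bound on the mixed norm $\|\lambda\langle v\rangle^{-\gamma/3}\nabla_v|D_x|^{1/3}f\|$ in terms of $\|\langle v\rangle^{\gamma/3}|D_x|^{2/3}f\|^{1/2}\{\|f\|^{1/2}+\|h\|^{1/2}\}$. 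Writing $J_1 = (\langle v\rangle^{\gamma/3}|D_x|^{-1/3}\nabla_x f,\ \lambda\langle v\rangle^{-\gamma/3}\nabla_v|D_x|^{1/3}f)$ then gives $J_1 \lesssim \|\langle v\rangle^{\gamma/3}|D_x|^{2/3}f\|^{3/2}\{\ldots\}$, and only now does Lemma~\ref{lemme-2} convert the $3/2$ power into $\|g\|$ times half-powers. The same device with $|v\wedge D_x|^{1/3}$ and $\mu\langle v\rangle^{-2\gamma/3}$ handles $J_2$. Your proposed ``interpolation between $|D_x|^{2/3}$ and $\|f\|$'' cannot replace this step: $J_1$ carries a full $\nabla_x$ derivative, exceeding the $2/3$ order that Lemma~\ref{lemme-2} provides, and the excess $1/3$ must be paired with $\nabla_v$ on the other factor --- which is exactly what the auxiliary energy estimate controls. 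The commutator bookkeeping with $\tilde\phi$ that you describe belongs to Lemma~\ref{lemme-3}, not here; no cutoff appears in the proof of Lemma~\ref{lemme-5}.
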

\begin{proof}  We have
$$  (v.\nabla_x f, \cL_1 (f) + \cL_2(f))$$
$$ = - (v.\nabla_x f, \nabla_v .\lambda (v) \nabla_v f(v)) -  (v.\nabla_x f, (v\wedge \nabla_v) . \mu (v) (v\wedge \nabla_v ) (f))$$
$$ = (\partial_{v_j} (v.\nabla_x f) , \lambda (v) \partial_{v_j} f ) + ((v\wedge \nabla_v)(v.\nabla_x f) , \mu (v) (v\wedge \nabla_v f))$$
$$ = (\nabla_x f , \lambda (v) \nabla_v f)) +(V_j (v.\nabla_x f) , \mu(v) V_j (f)) .$$

Now note that $V_j (v_i) = (v\wedge e_j).\nabla_v (v_i )= (v\wedge e_j).e_i$. Thus
$$ (v.\nabla_x f , Q(f)) = (\nabla_x f , \lambda (v) \nabla_v f)) +((v\wedge e_j).\nabla_x f, \mu(v) V_j (f)) $$
$$ = (\nabla_x f , \lambda (v) \nabla_v f))  - (v\wedge\nabla_x f , \mu(v) v\wedge \nabla_v f) =J_1 +J_2 .$$
The term $J_1$ is estimated as
$$J_1 = (<v>^{\gamma\over 3} \nabla_x |D_x|^{-1\over 3} f , \lambda (v)<v>^{-\gamma\over 3} \nabla_v |D_x|^{1\over 3} f) .$$

Noticing that $\tilde \lambda (v) = \lambda (v) .<v>^{-\gamma\over 3} \lesssim <v>^{2\gamma\over 3}$, we get
$$J_1 \lesssim \| <v>^{\gamma\over 3} |D_x|^{2\over 3} f\| \  \| \lambda (v)<v>^{-\gamma\over 3} \nabla_v |D_x|^{1\over 3} f\| ,$$
and we are reduced to study
$$\| \lambda (v)<v>^{-\gamma\over 3} \nabla_v |D_x|^{1\over 3} f\| .$$

We take $|D_x|^{1\over 3}$ of the equation, multiply by $\lambda <v>^{-2\gamma\over 3} =\lambda_1$, multiply by $|D_x|^{1\over 3}f$ and integrate to get
$$- ( \lambda_1 \nabla_v . \lambda \nabla_v |D_x|^{1\over 3}f ,|D_x|^{1\over 3}f  ) - ( \lambda_1 (v\wedge \nabla_v )\mu (v) (v\wedge \nabla_v)  |D_x|^{1\over 3}f ,|D_x|^{1\over 3}f  ) + (\lambda_1 F |D_x|^{2\over 3} f, f)= ( \lambda_1 |D_x|^{1\over 3} h, |D_x|^{1\over 3} f) .$$

We can forget the third term on the left hand side also. The first term on the right hand side is
$$- ( \lambda_1 \nabla_v . \lambda \nabla_v |D_x|^{1\over 3}f ,|D_x|^{1\over 3}f  )  = + (\lambda \nabla_v |D_x|^{1\over 3}f , \nabla_v \lambda_1 |D_x|^{1\over 3}f  )  + ( \lambda_1\lambda \nabla_v |D_x|^{1\over 3}f ,\nabla_v |D_x|^{1\over 3}f  ) .$$

The second term is what we want to estimate. So we need to upper bound the first one. Set $\beta = \lambda\nabla_v \lambda_1$ as a vector field. Then this is of the form (and by symmetry)
$$ (\beta.\nabla_v g, g) \simeq (g, g\ div_v \beta) .$$

Now 
$$div \ \beta \sim \lambda \Delta_v \lambda_1 + \nabla_v \lambda .\nabla_v \lambda_1$$

Since in fact $\lambda \sim <v>^\gamma$ and $\lambda_1 \sim <v>^{\gamma\over 3}$, then $\beta\sim <v>^{4(\gamma /3) -1}$ and thus $div \ \beta \sim <v>^{4(\gamma /3) -2}$,  we have something similar to
$$ (<v>^{4(\gamma /3) -2} |D_x|^{1\over 3}f  , |D_x|^{1\over 3}f )  \lesssim \| <v>^{\gamma\over 3} |D_x|^{2\over 3}f \| \ \| <v>^{\gamma -2} f\| .$$

This computation also adapts to the other term, and we get
$$\| \lambda (v)<v>^{-\gamma\over 3} \nabla_v |D_x|^{1\over 3} f\|^2 \lesssim  \| <v>^{\gamma\over 3} |D_x|^{2\over 3}f \| \ \bigg\{ \| <v>^{\gamma -2} f\| + \| h\| \bigg\}$$
that is
$$\| \lambda (v)<v>^{-\gamma\over 3} \nabla_v |D_x|^{1\over 3} f\| \lesssim  \| <v>^{\gamma\over 3} |D_x|^{2\over 3}f \|^{1\over 2} \ \bigg\{ \| <v>^{\gamma -2} f\|^{1\over 2} + \| h\|^{1\over 2} \bigg\}$$
and thus
$$J_1 \lesssim \| <v>^{\gamma\over 3} |D_x|^{2\over 3}f \|^{3\over 2} \ \bigg\{  \| <v>^{\gamma -2} f\|^{1\over 2} + \| h\|^{1\over 2} \bigg\} .$$

Using Lemma \ref{lemme-2} (with $g= h-\cL (f)$), one has
$$ \| <v>^{\gamma\over 3} |D_x|^{2\over 3} f\|^{3\over 2} \lesssim \bigg\{  \| <v>^{\gamma -1} f\|^{1\over 2} + \|<v>^\gamma |D_v|^2 f\|^{1\over 2} \bigg\} \ \| g\|$$
and we get
$$J_1\lesssim  \bigg\{ \| <v>^{\gamma -1} f\|^{1\over 2} + \|<v>^\gamma |D_v|^2 f\|^{1\over 2} \bigg\} \  \| g\| \ \bigg\{ \| <v>^{\gamma -2} f\|^{1\over 2} + \| h\|^{1\over 2} \bigg\} .$$

We now turn to $J_2$, recalling that $J_2 = - (v\wedge\nabla_x f , \mu(v) v\wedge \nabla_v f)$, and we proceed as for $J_1$. We write
$$J_2 = - (<v>^{\gamma \over 3}v\wedge\nabla_x |v\wedge \nabla_x|^{-1\over 3} f , \mu(v)<v>^{-\gamma\over 3} v\wedge \nabla_v |v\wedge \nabla_x|^{1\over 3}f)$$
$$\lesssim \| <v>^{\gamma\over 3} |v\wedge D_x|^{2\over 3} f\| \ \| \mu <v>^{-\gamma \over 3} v\wedge\nabla_v  |v\wedge \nabla_x|^{1\over 3}f\|$$

and therefore we need to study
$$\| \mu <v>^{-\gamma \over 3} v\wedge\nabla_v  |v\wedge \nabla_x|^{1\over 3}f\| .$$

We take $|v\wedge \nabla_x |^{1\over 3}$ of the equation, multiply by $\mu <v>^{-2\gamma\over 3} = \mu_1$, multiply by $f$ and integrate to get
$$- ( \mu_1 \nabla_v . \lambda \nabla_v |v\wedge D_x|^{1\over 3}f ,|v\wedge D_x|^{1\over 3}f  ) - ( \mu_1 (v\wedge \nabla_v )\mu (v) (v\wedge \nabla_v)  |v\wedge D_x|^{1\over 3}f ,|v\wedge D_x|^{1\over 3}f  ) = ( \mu_1 |v\wedge D_x|^{1\over 3} h, |v\wedge D_x|^{1\over 3} f) .$$

Above  the last term involving $F$ was omitted. We look for the second term:
$$ - ( \mu_1 (v\wedge \nabla_v )\mu (v) (v\wedge \nabla_v)  |v\wedge D_x|^{1\over 3}f ,|v\wedge D_x|^{1\over 3}f  )$$
$$ - ( \mu_1 V_j \mu V_j  |v\wedge D_x|^{1\over 3}f ,|v\wedge D_x|^{1\over 3}f  )$$
$$ = ( \mu_1 \mu V_j  |v\wedge D_x|^{1\over 3}f, V_j |v\wedge D_x|^{1\over 3}f  ) + (V_j [\mu_1] \mu V_j  |v\wedge D_x|^{1\over 3}f,|v\wedge D_x|^{1\over 3}f ) .$$

The first term of this equality is the one we are looking for. For the second one, set $\beta_j = V_j [\mu_1] \mu$, then this is of the form (and by anti symmetry):
$$ (\beta_jV_j g, g) \simeq (V_j [\beta_j] g, g)$$

Now $\mu_1 \sim <v>^{\gamma\over 3}$. Then $V_j [ \mu_1]\sim <v>^{\gamma\over 3}$. Then $\beta_j \sim <v>^{4\gamma\over 3}$ and so is $V_j [\beta_j]$ (in general).

Thus
$$(\beta_jV_j g, g) \simeq (V_j [\beta_j] g, g) \sim (<v>^{\gamma\over 3} |v\wedge D_x|^{2\over 3}f  ,<v>^\gamma f) .$$

Therefore, we find
$$\| \mu <v>^{-\gamma \over 3} v\wedge\nabla_v  |v\wedge \nabla_x|^{1\over 3}f\| \lesssim \| <v>^{\gamma\over 3} |v\wedge D_x|^{2\over 3}f \|^{1\over 2}\  \bigg\{  \| <v>^\gamma f\|^{1\over 2} + \| h\|^{1\over 2}\bigg\}$$
and thus
$$J_2 \lesssim \| <v>^{\gamma\over 3} |v\wedge D_x|^{2\over 3}f \|^{3\over 2} \ \bigg\{  \| <v>^\gamma f\|^{1\over 2} + \| h\|^{1\over 2}\bigg\} .$$

From Lemma \ref{lemme-3}, it follows that
$$\| <v>^{\gamma\over 3} <v\wedge D_x>^{2\over 3} f\|^{3\over 2} \lesssim \| <v>^{\gamma\over 3} f\|^{3\over 2} + $$
$$ + \| g\| \ \bigg\{  \| <v>^{\gamma +2} f\|^{1\over 2} + \|<v>^\gamma |D_v|^2 f\|^{1\over 2} + \|<v>^\gamma |v\wedge D_v|^2 f\|^{1\over 2} \bigg\} .$$

Therefore
$$J_2 \lesssim \bigg\{ \| <v>^\gamma f\|^{1\over 2} + \| h\|^{1\over 2}\bigg\}  \times $$
$$\Bigg\{ \| <v>^{\gamma\over 3} f\|^{3\over 2}  + \| g\| \ \bigg\{  \| <v>^{\gamma +2} f\|^{1\over 2} + \|<v>^\gamma |D_v|^2 f\|^{1\over 2} + \|<v>^\gamma |v\wedge D_v|^2 f\|^{1\over 2} \bigg\} \Bigg\}$$
and thus
$$J_1 +J_2 \lesssim  \bigg\{  \| <v>^{\gamma -1} f\|^{1\over 2} + \|<v>^\gamma |D_v|^2 f\|^{1\over 2} \bigg\} \ \| g\| \  [\bigg\{ \| <v>^{\gamma -2} f\|^{1\over 2} + \| h\|^{1\over 2} ]\bigg\}+$$
$$\bigg\{ \| <v>^\gamma f)\|^{1\over 2} + \| h\|^{1\over 2}\bigg\} \times $$
$$\Bigg\{ \| <v>^{\gamma\over 3} f\|^{3\over 2}  + \| g\| \ \bigg\{  \| <v>^{\gamma +2} f\|^{1\over 2} + \|<v>^\gamma |D_v|^2 f\|^{1\over 2} + \|<v>^\gamma |v\wedge D_v|^2 f\|^{1\over 2} \bigg\} \Bigg\} .$$

Using Lemma \ref{lemme-4}, we get (though of course it is not optimal), and taking into account that $\gamma \leq 1$ and Lemma \ref{lemme-1}, using the fact that ${\gamma\over 3} \leq {\gamma\over 2}+1$
$$J_1+J_2 \lesssim  \bigg\{  \| f\|^{1\over 2} + \|<v>^\gamma |D_v|^2 f\|^{1\over 2} \bigg\} \ \| g\| \  \bigg\{ \| f\|^{1\over 2} + \| h\|^{1\over 2} \bigg\} +$$
$$\bigg\{ \| f\|^{1\over 2} + \| h\|^{1\over 2}\bigg\} \times $$
$$\Bigg\{ \|f\|^{3\over 2} + \|h\|^{3\over 2} + \| g\| \ \bigg\{  \| <v>^{\gamma +2} f\|^{1\over 2} + \|<v>^\gamma |D_v|^2 f\|^{1\over 2} + \|<v>^\gamma |v\wedge D_v|^2 f\|^{1\over 2} \bigg\} \Bigg\} $$

which simplifies to yield
\begin{equation}\label{estimation-cruciale}\left\{\begin{array}{c} 
(v.\nabla_x f ,\cL_1 (f) +\cL_2 (f) )\lesssim \| f\|^2 + \| h\|^2 + \\
 \| g\| \bigg\{  \| f\|^{1\over 2} + \| <v>^{\gamma +2} f\|^{1\over 2} + \|<v>^\gamma |D_v|^2 f\|^{1\over 2} + \|<v>^\gamma |v\wedge D_v|^2 f\|^{1\over 2}  \bigg\} \end{array}\right.
 \end{equation}
where again recall that $g = h- \cL(f)$.
\end{proof}

Note the exponents on the r.h.s. of this inequality which are less than $2$. 

\smallskip

{\bf Step 4: The remaining scalar product}

We consider the last scalar product which appears in \eqref{step-0} (recall that it is on the l.h.s of the inequality we want to control and therefore we can forget any positive term)
$$(\cL_1 (f) ,\cL_ 2(f) = ( \nabla_v . \lambda (v)\nabla_v f , (v\wedge \nabla_v) \mu (v) (v\wedge \nabla_v )f) .$$

If we introduce $B_k = \sqrt\lambda \partial_{v_k}$, and $W_j = \sqrt\mu V_j$, then this term is also

$$ (B^\ast_k B_k f , W^\ast_j W_j f) = (B_k f , B_k W^\ast_j W_j f) = (B_k f, [B_k, W^\ast_j] W_j f) + (B_k f, W^\ast_j B_k W_j f)$$
$$ = (B_k f, [B_k, W^\ast_j] W_j f) + (W_jB_k f, B_k W_j f)$$
$$ = (B_k f, [B_k, W^\ast_j] W_j f) + (W_jB_k f, [B_k, W_j] f) + (W_j B_k f, W_jB_k f) .$$

If we exchange the role of $B_k$ and $W_j$ we find
$$ = (W_j f, [W_j, B^\ast_k] B_k f) + (B_kW_j f, [W_j, B_k] f) + (B_k W_j f, B_kW_j f)$$
$$ = ([W_j, B^\ast_k] ^\ast W_j f, B_k f) + (B_kW_j f, [W_j, B_k] f) + (B_k W_j f, B_kW_j f)$$
$$ = ([B_k, W^\ast_j] W_j f, B_k f) + (B_kW_j f, [W_j, B_k] f) + (B_k W_j f, B_kW_j f) .$$

Thus adding two lines we find
$$2 (B^\ast_k B_k f , W^\ast_j W_j f) = 2 ([B_k, W^\ast_j] W_j f, B_k f)  +(W_jB_k f, [B_k, W_j] f) - (B_kW_j f, [B_k, W_j] f) $$
$$+ (W_j B_k f, W_jB_k f) +(B_k W_j f, B_kW_j f)$$

so we have
\begin{equation}\label{produit-croise}
\left\{ \begin{array}{c}  2 (\cL_1 (f), \cL_2 (f) ) 
= 2 (B^\ast_k B_k f , W^\ast_j W_j f) \\ = 2 ([B_k, W^\ast_j] W_j f, B_k f) - ([B_k, W_j]f,  [B_k, W_j] f) \
+ (W_j B_k f, W_jB_k f) +(B_k W_j f, B_kW_j f) . \end{array}\right.
\end{equation}

Note that the last two terms are positive (so we can forget them). We compute 
$$[B_k ,W_j ] (f) = [ \sqrt \lambda \partial_{v_k} , \sqrt\mu V_j ] (f) = \sqrt \lambda \partial_{v_k}  [ \sqrt\mu V_j  (f) ] - \sqrt\mu V_j [  \sqrt \lambda \partial_{v_k}  (f)]$$
$$ = -\sqrt \lambda \partial_{v_k}  [ \sqrt\mu (\nabla_vf \wedge e_j).v] - \sqrt \mu (v\wedge e_j).\nabla_v [   \sqrt \lambda \partial_{v_k}  (f)]$$
$$  \sqrt\lambda \partial_{v_k} [\sqrt \mu] V_j (f) +\sqrt\lambda \sqrt \mu V_j (\partial_{v_k} f)  +\sqrt\lambda \sqrt\mu (e_k \wedge e_j) .\nabla_v f $$
$$ - \sqrt \mu (v\wedge e_j) . \nabla_v [\sqrt\lambda ]\partial_{v_k}  (f) - \sqrt\lambda \sqrt \mu (v\wedge e_j) .\nabla_v \partial_{v_k} f$$
$$ =  \sqrt\lambda \partial_{v_k} [\sqrt \mu] V_j (f)  +\sqrt\lambda \sqrt\mu (e_k \wedge e_j) .\nabla_v f - \sqrt \mu (v\wedge e_j) . \nabla_v [\sqrt\lambda ]\partial_{v_k}  (f)$$
$$ = \sqrt\lambda \partial_{v_k} [\sqrt \mu] V_j (f)  +\sqrt\lambda \sqrt\mu (e_k \wedge e_j) .\nabla_v f - \sqrt \mu V_j [\sqrt\lambda ]\partial_{v_k}  (f)$$

Therefore
$$\| [B_k ,W_j ] (f)\|^2 \lesssim \| \sqrt\lambda \partial_{v_k} [\sqrt \mu] V_j (f)\|^2  + \| \sqrt\lambda \sqrt\mu (e_k \wedge e_j) .\nabla_v f\|^2 + \| \sqrt \mu V_j [\sqrt\lambda ]\partial_{v_k}  (f)\|^2 .$$

We note that the weight for the first term is similar to $<v>^{\gamma -1}$, for the second one to $<v>^\gamma$ and for the last to $<v>^\gamma$ also. Thus
$$\| [B_k ,W_j ] (f)\|^2 \lesssim  \| <v>^{\gamma} \nabla_v f\|^2$$
$$ = (<v>^{2\gamma} \nabla_v f , \nabla_v f ) = - (<v>^{2\gamma} \Delta_v f , f) - (\nabla_v <v>^{2\gamma} .\nabla_v f ,f) $$
and by symmetry
$$ \lesssim (<v>^{2\gamma} \Delta_v f , f) +  (\Delta_v <v>^{2\gamma} f, f).$$
Thus
$$ \| [B_k ,W_j ] (f)\|^2 \lesssim \varepsilon \| <v>^\gamma |D_v|^2 f\|^2 +C_\varepsilon \| <v>^{{\gamma\over 2} +1 } f\|^2 .$$

For the term in \eqref{produit-croise} involving $2 ([B_k, W^\ast_j] W_j f, B_k f)$, we can substract $W^\ast_j$ to write
$$2 ([B_k, W^\ast_j] W_j f, B_k f) = 2 ([B_k, W^\ast_j -W_j] W_j f, B_k f) + 2 ([B_k, W_j] W_j f, B_k f)$$
$$ = 2 ([B_k, V_j (\sqrt\mu)] W_j f, B_k f) + 2 ([B_k, W_j] W_j f, B_k f)$$

Now $[B_k, V_j (\sqrt\mu )] (f) = \sqrt\lambda\partial_{v_k} [ V_j (\sqrt\mu )] f$ and we note that the weight is $<v>^{\gamma -1}$. Thus the first term is estimated by something similar to
$$(<v>^{\gamma -1} W_j f, B_k f ) \lesssim \|<v>^{{\gamma \over 2} -1/2} W_j f\|^2 + \| <v>^{{\gamma \over 2} -1/2}B_k f\|^2 \lesssim \| f\| \  \|h\|$$

For the other remaining term $2 ([B_k, W_j] W_j f, B_k f)$, we have in view of the commutators that it looks as something similar to
$$(<v>^{\gamma -1} V_j \sqrt\mu V_j f, \sqrt \lambda \partial_{v_k} f) + (<v>^\gamma (e_k\wedge e_j).\nabla_v \sqrt \mu V_j f, \sqrt \lambda \partial_{v_k} f)  + (<v>^\gamma \partial_{v_k} \sqrt\mu V_j f, \sqrt \lambda \partial_{v_k} f) $$
and thus it looks as
$$<v>^{2\gamma -1} V_j^2 f, \partial_{v_k} f) + (<v>^{2\gamma} V_j , \Delta_v f ) $$
$$\lesssim \| <v>^\gamma V^2_j f\| . \| <v>^{\gamma -1} \nabla_v f\| + \|<v>^\gamma |D_v|^2 f\| \| <v>^\gamma V_j f\|$$
$$\lesssim \| <v>^\gamma V^2_j f\| . \| <v>^{\gamma \over 2} \nabla_v f\| + \|<v>^\gamma |D_v|^2 f\| \| <v>^\gamma V_j f\|$$
$$\lesssim \varepsilon \| <v>^\gamma V^2_j f\|^2  +C_\varepsilon  \| <v>^{\gamma \over 2} \nabla_v f\|^2 + \varepsilon\|<v>^\gamma |D_v|^2 f\|^2 +C_\varepsilon \| <v>^\gamma V_j f\|^2 .$$

But we can also write that
$$\| <v>^\gamma V_j f\|^2 = (<v>^{2\gamma} V_jf, V_j f ) = (<v>^\gamma V^2_j f , <v>^\gamma f) \lesssim \| <v>^\gamma |v\wedge D_v|^2 f \| \ \| <v>^\gamma f\|$$
thus
$$  \|<v>^\gamma |D_v|^2 f\| \ \| <v>^\gamma V_j f\| \lesssim \|<v>^\gamma |D_v|^2 f\|^{3\over 2} \ \| <v>^\gamma f\|^{1\over 2}$$

and by using Holder with exponent and a small exponent $\varepsilon$ again, we find that it is less than
$$\varepsilon \|<v>^\gamma |D_v|^2 f\|^{2}  +C_\varepsilon \|<v>^\gamma f\|^2 .$$

All in all, we have shown that all terms from the scalar product (but for the positive) in \eqref{produit-croise} can be absorbed with previous ones. That is the scalar product on the l.h.s. of \eqref{step-0} can be absorbed with all the other terms, taking into account also \eqref{estimation-cruciale}. This is enough to conclude the proof of the main Theorem, in view of the exponents appearing on the r.h.s.

\begin{rema}

We make some comments about the case of the Cauchy problem, when working say on a time interval $(0,T)$ (possibly with $T= +\infty$), with an initial value at time $t=0$ given by $f_0$. 

Let $\phi_1 =\phi_1 (t)$ be a smooth and compactly supported positive function being one for small positive values of time. Let $\phi_2 = \phi_2 (t)$ be again a smooth and positive function, but being one for larger values of $t$ and such that $\phi_1 +\phi_2 \sim 1$. Let $f_1 = \phi_1 f$ and $f_2 = \phi_2 f$. We shall also assume that eventually $\phi_2$ has a compact support on the right of the real axis.

We first consider $f_2$. Then
\begin{equation}\label{cauchy-eq1}
\partial_t f_2 +v.\nabla_x f_2 - \nabla_v .\lambda (v) \nabla_v f_2 - (v\wedge \nabla_v ).\mu (v) (v\wedge \nabla_v f_2) +F(v) f_2(v) = \phi_2 h  + \phi '_2 f  \equiv h_2.
\end{equation}
Now considering the fact that this equation for $f_2$ holds true all over $\RR$, the previous results apply. Of course, we need $f$ to be in $L^2$ in all variables. This statement might be assumed, but note that it also follows from the equation if we assume that $\gamma \geq -2$ by using the coercivity from the third term of the diffusive part. 

We can now turn to $f_1$ which also satisfies:
\begin{equation}\label{cauchy-eq2}
\partial_t f_1 +v.\nabla_x f_1 - \nabla_v .\lambda (v) \nabla_v f_1 - (v\wedge \nabla_v ).\mu (v) (v\wedge \nabla_v f_2) +F(v) f_1(v) = \phi_1 h  + \phi '_1 f \equiv h_1,  \ f_1|_{t=0} = f_0 .
\end{equation}
A careful look at the previous proof shows that if we assume that $\cL_i f_0$ belongs to $L^2$, then we have again the same conclusion as in the main theorem. 

Finally, a last (formal) comment can be made if we assume $h=0$ from the beginning. In that case, as it is standard in semi group theory, we can get formally more regularity. As explained above, we assume $\gamma \geq -2$ to ensure that we have automatically an $L^2$ bound. Then following formally the arguments from \cite{caze},  we see that, using only norms w.r.t. $(x,v)$ variables
$$\| ( -v.\nabla_x f -\cL (f) ) f(t) \| \leq {1\over{t\sqrt 2}} \| f_0\|$$
By using the main Theorem (without time dependence), we see that we have hypoelliptic results on $f(t)$ in terms of the initial data, and with a singular behavior as times tends to $0$.

\end{rema}

\end{document}